\title[Back and forward arcs in Depth First Search in a random digraph]
{On Knuth's conjecture 
for back and forward arcs in Depth First Search in a random digraph with
geometric outdegree distribution}
\date{10 January, 2023}
\author{Svante Janson}
\thanks{Supported by the Knut and Alice Wallenberg Foundation}
\address{Department of Mathematics, Uppsala University, PO Box 480,
SE-751~06 Uppsala, Sweden}
\email{svante.janson@math.uu.se}
\newcommand\urladdrx[1]{{\urladdr{\def~{{\tiny$\sim$}}#1}}}
\subjclass[2020]{} 
\numberwithin{equation}{section}
\renewcommand\le{\leqslant}
\renewcommand\ge{\geqslant}
\theoremstyle{plain}% default
\newtheorem{theorem}{Theorem}[section]
\newtheorem{proposition}[theorem]{Proposition}
\newtheorem{corollary}[theorem]{Corollary}
\newtheorem{conj}[theorem]{Conjecture}
\theoremstyle{definition}
\newtheorem{exampleqqq}[theorem]{Example}
\newtheorem{remarkqqq}[theorem]{Remark}
\newenvironment{remark}{\begin{remarkqqq}}
  {\hfill\qedsymbol\end{remarkqqq}}
\newtheorem{problem}[theorem]{Problem}
\theoremstyle{remark}
\newenvironment{romenumerate}[1][-10pt]{% optional argument changes indentation
\addtolength{\leftmargini}{#1}\begin{enumerate}% gives (i), (ii) etc.
 }{\end{enumerate}}
\newcounter{oldenumi}
\newcounter{thmenumerate}
\newcounter{xenumerate}   %no left indentation; thus wider lines
\newcommand{\refC}[1]{Corollary~\ref{#1}}
\newcommand{\refS}[1]{Section~\ref{#1}}
\newcommand\marginal[1]{\marginpar[\raggedleft\tiny #1]{\raggedright\tiny#1}}
\newcommand\REM[1]{{\raggedright\texttt{[#1]}\par\marginal{XXX}}}
\newcommand\XREM[1]{\relax}
\xdef\klockan{\the\count1.0\the\count255}
\xdef\klockan{\the\count1.\the\count255}\fi
\newcommand{\sumio}{\sum_{i=0}^\infty}
\newcommand\bigpar[1]{\bigl(#1\bigr)}
\newcommand\Bigpar[1]{\Bigl(#1\Bigr)}
\newcommand\bigsqpar[1]{\bigl[#1\bigr]}
\def\rompar(#1){\textup(#1\textup)}    % usage: \rompar(...)
\def\xexp(#1){e^{#1}}
\newcommand\punkt{\xperiod}    % xpunctuate
\newcommand\ie{i.e\punkt}
\newcommand\eqd{\overset{\mathrm{d}}{=}}
\newcounter{CC}
\newcounter{cc}
\newcommand\E{\operatorname{\mathbb E{}}}
\renewcommand\P{\operatorname{\mathbb P{}}}
\newcommand\Ge{\operatorname{Ge}}
\renewcommand\phi{\xxx}  %% WARNING
\newcommand\qw{^{-1}}
\newcommand\GX{\widehat G}
\newcommand\GY{\widecheck G}
\begin{document}

% Some suggestions:
% 05 Combinatorics 
% 05C Graph theory [For applications of graphs, see 68R10, 90C35, 94C15]
% 05C05 Trees
% 05C07 Vertex degrees
% 05C35 Extremal problems [See also 90C35]
% 05C40 Connectivity
% 05C65 Hypergraphs
% 05C80 Random graphs
% 05C90 Applications
% 05C99 None of the above, but in this section 
% 
% 
% 68Q Theory of computing
% 68Q25 Analysis of algorithms and problem complexity [See also 68W40] 
% 
% 68R Discrete mathematics in relation to computer science
% 68R10 Graph theory [See also 05Cxx, 90B10, 90B35, 90C35]
% 
% \end{comment}

\dedicatory{
To Donald Knuth on his 85th birthday}

\begin{abstract}
Donald Knuth, 
in a draft of a coming volume of \emph{The Art of Computer Programming},
has recently conjectured 
that in Depth-First Search of a random digraph with geometric outdegree
distribution,
the numbers of back and forward arcs have the same distribution.

We show that this conjecture is equivalent to an equality
between two generating functions defined by different recursions.

Unfortunately, we have not been able so use this to prove the conjecture,
which still is open,
but we hope that this note will inspire others to succeed 
with the conjecture.
\end{abstract}

\maketitle

%\REM{Write Abstract! Update date! No showkeys! }

\section{Introduction}\label{S:intro}

Donald Knuth \cite[Section 7.4.1.2]{Knuth12A}, 
in a draft of a coming volume of \emph{The Art of Computer Programming},
studies  Depth-First Search (DFS) in a digraph.
Among the results there, he makes an
intriguing conjecture 
\cite[Problem 7.4.1.2-35]{Knuth12A} 
for the case when the digraph is random with a geometric
outdegree distribution; see \refS{Sconj} below.
The conjecture is strongly supported by exact calculations showing that it
holds for small digraphs 
(order $n\le9$, extend by us by the methods below to $n\le18$).

We have not been able to prove this conjecture, but we show below 
(\refC{CKnuth})
that the
conjecture is equivalent to the equality of two generating functions defined
by similar but different recursions. 
(We also give an extended version of the conjecture.)
The purpose of this note is to present this reformulation in the hope of
stimulating others to show the conjecture.

\section{Knuth's conjecture}\label{Sconj}
\subsection{Depth-First Search}
For a detailed description of DFS in a digraph, 
and for historical notes,
see \cite[Section 7.4.1.2]{Knuth12A}.
Briefly, the DFS starts with an arbitrary vertex, and explores the arcs from
that vertex one by one. When an arc is found leading to a  vertex that
has not been seen before, the DFS explores the arcs from it in the same way,
in a recursive fashion, before returning to the next arc from its parent.
This eventually yields a tree containing all descendants of the the first
vertex.
If there still are some unseen vertices, the DFS starts again with one of
them and finds a new tree, and so on until all vertices are found.
The DFS thus yields a spanning forest in the digraph, called the 
\emph{depth-first forest}.

The discarded arcs (leading to already seen vertices) may be classified
further; 
\cite[Section 7.4.1.2]{Knuth12A} 
classifies the arcs in the
digraph into the following five types
\begin{itemize}
    \item \emph{loops};
    \item \emph{tree arcs}, the arcs in the resulting depth-first forest;
    \item \emph{back arcs}, 
the arcs that point to an ancestor of the current vertex in the current tree;
    \item \emph{forward arcs}, 
the arcs that point to an already discovered descendant of the current
vertex in the current tree;
    \item \emph{cross arcs}, 
all other arcs (these point to an already discovered vertex which is neither
a descendant nor an ancestor of the current vertex, and might be in another
tree).
\end{itemize}

Each instance of DFS in a digraph is described by the sequence of arcs
that are explored, taken in the order they are explored.
For an instance of the DFS,
%(or any sequence of arcs that may appear in this way) 
let 
$L,F,B,C,T$ be the numbers of loops,
forward, backward, cross, and tree arcs.

Let $F_n(w,x,y,z,t)$ be the generating function defined as the sum of
$w^Lx^Fy^Bz^Ct^T$ over all instances of a DFS of a digraph on $n$ vertices
labelled $1,\dots,n$, where we for definiteness 
assume that when the DFS starts a new tree, it chooses as the root
the unused vertex with smallest label.
(Algorithm D in \cite[7.4.1.2]{Knuth12A} chooses the unused  vertex with largest
label;
this is obviously equivalent for our purposes, 
but notationally less convenient for us.)

\subsection{Random digraphs with geometric outdegree distribution} 
From now on we consider DFS in a random digraph given by the following
model.

The number of vertices $n$ is given; we are also given a parameter $p\in(0,1)$.
Each vertex is given an outdegree that is a random variable with
a geometric distribution $\Ge(1-p)$;
in other words,
if the outdegree of vertex $i$ is $\eta_i$, then
\begin{align}
  \P(\eta_i=k) = p^k(1-p),
\qquad k=0,1,2,\dots;
\end{align}
moreover,
the outdegrees of different vertices are independent.
This gives each vertex $i$ a number $\eta_i$ of outgoing arcs; the other 
endpoint of these arcs are chosen uniformly at random among all $n$
vertices, independently for all arcs. (Thus, loops and multiple arcs are
allowed, so the digraph is really a multidigraph.)

\subsection{The conjecture}
Donald Knuth 
\cite[Problem 7.4.1.2-35]{Knuth12A} 
conjectures
that for a digraph with a geometric outdegree distribution,
for every $n\ge1$ and every $p\in(0,1)$,
the random variables $F$ and $B$ have the same distribution, which we write
as
\begin{align}\label{kn1}
F\eqd B.  
\end{align}
Equivalently, in terms of probability generating functions,  
\begin{align}\label{kn2}
\E x^F=\E x^B.   
\end{align}
(Knuth verified this for $n\le 9$ by explicit calculations.)

\subsection{The conjecture and generating functions}
Given a sequence of arcs that describes an instance of a DFS, we find the
probability of observing exactly this sequence as follows:
for each arc in the sequence, the emitting vertex should send out at least
one more arc after what we may have seen earlier (probability $p$), and the
endpoint of this arc should be a specified vertex (probability $1/n$);
furthermore, for each vertex, there is one time when this vertex had a chance
to send out more vertices but did not do so (probability $1-p$).
Hence, the probability is $(p/n)^{L+F+B+C+T}(1-p)^n$.
Consequently, the joint probability generating function of $(L,F,B,C,T)$
for the DFS in a random digraph with $n$ vertices and
$\eta\sim\Ge(1-p)$ is
\begin{align}\label{a1}
\E \bigsqpar{w^Lx^Fy^Bz^Ct^T}
=
  (1-p)^nF_n
\Bigpar{w\frac{p}{n},x\frac{p}{n},y\frac{p}{n},z\frac{p}{n},t\frac{p}{n}}.
\end{align}

By \eqref{a1}, Knuth's conjecture \eqref{kn1}
is thus equivalent to
\begin{align}\label{a2}
F_n \Bigpar{\frac{p}{n},x\frac{p}{n},\frac{p}{n},\frac{p}{n},\frac{p}{n}}
=
F_n\Bigpar{\frac{p}{n},\frac{p}{n},x\frac{p}{n},\frac{p}{n},\frac{p}{n}},
%\qquad n\ge1,
\end{align}
or, by first replacing $p/n$ by $w$ and then $xw$ by $x$, 
\begin{align}\label{a3}
F_n \bigpar{w,x,w,w,w}
=
F_n\bigpar{w,w,x,w,w}
%,\qquad n\ge1
.
\end{align}

In fact, we conjecture, more generally, that
for every $n\ge1$,
\begin{align}\label{a4}
F_n \bigpar{w,x,z,z,t}
=
F_n\bigpar{w,z,x,z,t},
%\qquad n\ge1,
\end{align}
which by the same argument is equivalent to
\begin{align}\label{a5}
  \E \bigsqpar{w^L x^F z^{B+C} t^F}
=
\E \bigsqpar{w^L x^B z^{F+C} t^F},
\end{align}
and  thus to the following extended version of Knuth's conjecture, with joint
distributions of different variables.
\begin{conj}
For every $n\ge1$ and every $p\in(0,1)$,
  \begin{align}\label{a6}
  (L,F,B+C,T)\eqd(L,B,F+C,T).
\end{align}
\end{conj}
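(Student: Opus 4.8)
The plan is to prove the conjecture in its generating-function form. By the equivalence of \eqref{a4}, \eqref{a5} and \eqref{a6} established above, it suffices to prove the polynomial identity \eqref{a4}, namely $F_n(w,x,z,z,t)=F_n(w,z,x,z,t)$ for every $n$; as with \refC{CKnuth} for the original conjecture \eqref{kn1}, this is the equality of two generating functions defined by similar but different recursions. I would first stress that \eqref{a4} is strictly weaker than the naive exchangeability $F_n(w,x,y,z,t)=F_n(w,y,x,z,t)$ that keeps $C$ fixed, which I expect to be false: the merging of the cross variable with $B$ on one side and with $F$ on the other is essential, so any proof must let cross arcs absorb the discrepancy between forward and back arcs.

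Next I would make the two recursions explicit by tracking the state of the DFS. While exploring a vertex $v$, the relevant quantities are $a$, the number of strict ancestors of $v$ on the active stack, $d$, the number of descendants of $v$ already finished, and $c$, the number of finished non-descendants; an out-arc of $v$ is a loop, back, forward, cross, or tree arc according as its uniform target is $v$, one of the $a$ ancestors, one of the $d$ finished descendants, one of the $c$ finished non-descendants, or one of the remaining unseen vertices, and only in the last case do we recurse. On the left of \eqref{a4} forward arcs carry $x$ while back and cross arcs both carry $z$; since the latter two share a weight, the state collapses to $(s,d)$ with $s=a+c$, descending into a child sends $(s,d)\mapsto(s+d+1,0)$, and completion of a subtree of size $m$ returns $d\mapsto d+m$. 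On the right of \eqref{a4} back arcs carry $x$ while forward and cross arcs both carry $z$; here the state collapses to $(a,r)$ with $r=d+c$, descending into a child sends $(a,r)\mapsto(a+1,r)$, and completion of a subtree of size $m$ returns $r\mapsto r+m$. This realizes \eqref{a4} as the equality of two generating functions built from these two genuinely different state evolutions.

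With the recursions in hand I would try two routes. The first is induction on $n$: decompose a DFS instance at its first tree, and inside it at the first child subtree of the root, read off the two recursions as functional equations in $n$ and the auxiliary state, and match them term by term. The second, more ambitious, is to construct a weight-preserving involution on DFS arc-sequences that fixes $L$ and $T$, interchanges the numbers of forward and back arcs, and leaves $B+C$ invariant---equivalently, one that transfers the forward/back discrepancy onto the cross arcs. Either route must reconcile the two evolutions above: in the $(a,r)$ picture the $x$-weighted (back) arcs from $v$ have multiplicity equal to the stack depth $a$, which increases by exactly one per level of recursion, whereas in the $(s,d)$ picture the $x$-weighted (forward) arcs from $v$ have multiplicity equal to the finished-descendant count $d$, which increases in irregular jumps by completed subtree sizes.

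I expect this last mismatch to be the crux, and the point at which the attempt stalls. A single back arc and a single forward arc become available to $v$ at completely different rates---linear in depth versus cumulative subtree sizes---so a local swap of one for the other generically destroys the consistency of the arc-sequence as a valid DFS instance, since the target must actually be an ancestor, respectively a finished descendant, at that instant; hence I see no canonical local involution. Matching the two functional equations amounts instead to a global change of variables turning each "increment $a$ by one" descent into a "jump $s$ by $d+1$" descent, summed over the branching structure, and I do not know how to realize this. Absent such a device, the reformulation \eqref{a4} is as far as the method reaches, which is consistent with the conjecture remaining open.
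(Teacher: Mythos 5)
The statement you were asked to prove is stated in the paper as a \emph{conjecture}: the paper contains no proof of \eqref{a6}, only a reduction to the equivalent identity \eqref{b7} between two recursively defined families $\GX_n$ and $\GY_n$, and the problem is explicitly left open. Your proposal likewise contains no proof: after reducing \eqref{a6} to the polynomial identity \eqref{a4} and setting up the two state evolutions, you stall and concede that the reformulation ``is as far as the method reaches.'' So there is a genuine gap --- the conjecture is simply not established --- but it is the same gap the paper leaves, not an error peculiar to your argument. Neither of your two proposed routes (term-by-term matching of the recursions, or a weight-preserving involution on arc-sequences) is carried out, and you correctly diagnose why the involution route in particular should not be expected to work locally.

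As far as the reduction goes, your setup is correct and is essentially the paper's argument in a different packaging. Your DFS state $(a,d,c)$ (ancestors on the stack, finished descendants, finished non-descendants) correctly classifies loop, back, forward, cross and tree arcs, and the two collapsed evolutions --- $(s,d)$ with $s=a+c$ on the left of \eqref{a4}, $(a,r)$ with $r=d+c$ on the right --- correspond exactly to the paper's recursions \eqref{b5} and \eqref{b6} for $\GX_n(w,x,z):=G_n(w,x,z,z)$ and $\GY_n(w,x,z):=G_n(w,z,x,z)$: your jump $s\mapsto s+d+1$ is the shift $w\mapsto w+mz$ in \eqref{b5}, your increment $a\mapsto a+1$ is the shift $w\mapsto w+x+(m-1)z$ in \eqref{b6}, and the two denominators $1-w-(n-1)x$ versus $1-w-(n-1)z$ record the closing arcs at the root. (One difference of detail: the paper first factors the full DFS into its depth-first forest, reducing \eqref{a4} to the single-tree identity \eqref{a7}, and then recurses on the last child of the root; your state-machine description is equivalent but does not isolate this one-tree reduction, which is what makes the paper's recursion clean.) The crux you identify --- back-arc multiplicity grows by one per level of depth while forward-arc multiplicity grows by completed subtree sizes --- is precisely the asymmetry between \eqref{b5} and \eqref{b6}, and is why the paper stops at \refC{CKnuth} and poses \eqref{b7} as an open problem rather than a theorem.
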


\begin{remark}
  The much weaker statement that $\E F=\E B$, i.e., that the expectations
  are the same, is proved in \cite[Theorem 2.18]{SJ364}.
It is there also shown that $\E T = \E C$, but that $T$ and $C$ do not have
the same distribution.
\end{remark}

\begin{remark}
  It is easy to see that \eqref{a6} cannot be extended to 
$  (L,F,B,C,T)\eqd(L,B,F,C,T)$.
This fails already for $n=3$, as can be seen by the argument in \refS{Srec},
calculating $G_3(w,x,y,z)$ by \eqref{b3} and noting that 
$G_3(w,x,y,z)\neq G_3(w,y,x,z)$.
\end{remark}

\section{Recursion formulas}\label{Srec}

To find $F_n$, consider now the DFS stopped when the first tree is
completed,
and define $G_m(w,x,y,z,t)$ as the sum of $w^Lx^Fy^Bz^Ct^T$ over all
sequences of arcs that yield the first tree in an instance of DFS,
where moreover this tree has $m$ given vertices and these are visited in a
given order 
(for example, the vertices $1,2,\dots,m$);
we here define $L,F,B,C,T$ as above also for sequences of arcs that
are only a part of the full DFS.

In general, a DFS creates a forest with several trees. 
Consider the contribution to $F_n$ from the case when 
the depth-first forest have $k$ given trees with sizes $m_1,m_2,\dots,m_k$,
with vertices visited in a given order.
The exploration of the first tree gives a factor $G_{m_1}(w,x,y,z,t)$.
The exploration of the second tree proceeds like an exploration of just that
tree, but we may also have cross arcs that go back to the any of the $m_1$
vertices of the first tree; in fact, any time that we may have a loop we may
instead have one of these cross arcs, and therefore the exploration of the
second tree yields a factor $G_{m_2}(w+m_1z,x,y,z,t)$, where $w$ is replaced
by $w+m_1z$ to account for the possible additional cross arcs.
The same reasoning applies to all following trees, and it follows that the
contribution to $F_n(w,x,y,z,t)$ 
is $\prod_{i=1}^k G_{m_i}(w+M_{i-1}z,x,y,z,t)$, 
where $M_j:=\sum_{i\le j}m_i$,
and thus $F_n(w,x,y,z,t)$ equals the
sum over all possible depth-first forests of such products.
Consequently, \eqref{a4} follows if, for every $m\ge1$,
\begin{align}\label{a7}
G_m \bigpar{w,x,z,z,t}
=
G_m\bigpar{w,z,x,z,t}.
\end{align}
Conversely, it follows by induction that \eqref{a7} is necessary for
\eqref{a4} to hold for all $n$.

In the definition of $G_m$, we consider only sequences of arcs that produce
a tree with $m$ vertices; thus the number of tree edges $T=m-1$.
Hence we have
\begin{align}\label{a8}
  G_m(w,x,y,z,t)=t^{m-1}G_m(w,x,y,z,1).
\end{align}
We simplify the notation by writing $G_m(w,x,y,z):=G_m(w,x,y,z,1)$, and see that
\eqref{a7} is equivalent to
\begin{align}\label{a9}
G_m \bigpar{w,x,z,z}
=
G_m\bigpar{w,z,x,z}.
\end{align}

We find a recursion formula for $G_n$ (now replacing $m$ by $n$ for
convenience). 
For $n=1$, the sequences of arcs that appear in the definition of $G_1$ are
the sequences of 0 or more loops at the root. Hence, $F=B=C=0$ and
\begin{align}\label{a10}
  G_1(w,x,y,z)=\sumio w^i=\frac{1}{1-w}.
\end{align}

For $n\ge2$, suppose that the DFS produces a tree $\tau$ on the vertices
$1,\dots,n$ (in this order).
Consider the last child of the root, and let it have label $m+1$ where $1\le
m\le n-1$. Let $\tau'$ be the subtree of $\tau$ formed by the vertices
before $m+1$, \ie{} $1,\dots,m$, and let $\tau''$ be the subtree formed 
by $m+1$ and its descendants; these have sizes $|\tau'|=m$ and
$|\tau''|=n-m$.
An exploration of $\tau$ consists of 
\begin{enumerate}
\item an exploration of $\tau'$,
\item a tree arc from the root 1 to $m+1$,
\item an exploration of $\tau''$,
\item\label{taud} return to the root and a number of arcs leading only to already seen
  vertices. 
\end{enumerate}
In order for a sequence of arcs to produce the tree $\tau$, we have as long
as we are exploring inside $\tau'$
exactly the same possibilities as for the tree $\tau'$ on its own,
but during the exploration of $\tau''$, we can also have edges to vertices
in $\tau'$; more precisely, we see that for any exploration yielding the
tree $\tau''$, each time we may have a loop, we can in the exploration of $\tau$
also have a back edge to the root 1, or a cross edge to one of the $m-1$ other
vertices in $\tau'$. This corresponds to replacing $w$ by $w+y+(m-1)z$ in
the generating function.
Moreover, in step \ref{taud}, we can have any number $\ge0$ of arcs 
from 1 to itself (a loop) or to one of the $n-1$ other vertices in $\tau$ (a
forward arc). This yields for the generating function a factor
\begin{align}
  \label{b2}
\sumio (w+(n-1)x)^i=\frac{1}{1-w-(n-1)x}.
\end{align}
Consequently, summing over all possible $m$ and trees $\tau'$ and $\tau''$, 
we find, for $n\ge2$,
\begin{align}\label{b3}
G_n(w,x,y,z)
=\frac{1}{1-w-(n-1)x}\sum_{m=1}^{n-1}G_m(w,x,y,z)G_{n-m}(w+y+(m-1)z,x,y,z).
\end{align}
We have shown the following:
\begin{proposition}
  The generating function $G_n$ satisfies the recursion \eqref{b3} with the
  initial value \eqref{a10}.
\end{proposition}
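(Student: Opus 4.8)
The statement merely records the two formulas \eqref{a10} and \eqref{b3}, so the plan is to justify each by decomposing the arc sequences counted by $G_n$ and tracking the classification of every arc. I would first dispose of the base case $n=1$: a legal sequence here builds a one-vertex tree, so the root can only emit arcs to itself, \ie{} loops. Hence $F=B=C=T=0$ while the number of loops may be any $i\ge0$, and summing $w^i$ gives $\sum_{i\ge0}w^i=1/(1-w)$, which is \eqref{a10}.

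For $n\ge2$ I would fix the tree $\tau$ on $1,\dots,n$, single out the last child $m+1$ of the root (with $1\le m\le n-1$), and split $\tau$ into $\tau'$ on $\{1,\dots,m\}$ and $\tau''$ on $m+1$ and its descendants. The combinatorial fact to verify first is that every arc sequence building $\tau$ factors uniquely into the four consecutive stages (i)--(iv) described above, with the root's closing loops and forward arcs correctly split between stage~(i) (those emitted before the tree arc to $m+1$, which are part of the exploration of $\tau'$) and stage~(iv) (those emitted after $\tau''$ is finished). Granting this factorization, stage~(i) is a standalone exploration of $\tau'$ and contributes $G_m(w,x,y,z)$, whereas stage~(ii) is a single tree arc, contributing $t=1$ after the normalization \eqref{a8}.

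The step I expect to require the most care is stage~(iii), the exploration of $\tau''$. Here one must check that, relative to a standalone exploration of $\tau''$, the only change is that at each point where the standalone search could emit a loop at the current vertex, the embedded search may instead send an arc to a vertex of $\tau'$, all of which are already seen throughout stage~(iii). Such an arc is a back arc when it targets the root, which is an ancestor of every vertex of $\tau''$ (weight $y$), and a cross arc when it targets one of the remaining $m-1$ vertices of $\tau'$, which are neither ancestors nor descendants of the current vertex (weight $z$); crucially no new tree, forward, or within-$\tau''$ back arcs are created, since the descendants and proper ancestors of the current vertex are exactly as in the standalone search. Verifying that these extra options appear at precisely the loop positions, and nowhere else, is the heart of the argument, and it yields the substitution $w\mapsto w+y+(m-1)z$ with $x,y,z$ left untouched, hence the factor $G_{n-m}(w+y+(m-1)z,x,y,z)$.

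Finally, in stage~(iv) the search has returned to the root with all $n$ vertices already discovered, so every further arc out of the root is either a loop or a forward arc to one of the $n-1$ other vertices, all now descendants of the root; summing over the number of such arcs gives $\sum_{i\ge0}(w+(n-1)x)^i=1/(1-w-(n-1)x)$, the factor in \eqref{b2}. Multiplying the four factors and summing over $m=1,\dots,n-1$ and over all trees $\tau'$ and $\tau''$ — which is exactly the double sum already encoded by $G_m$ and $G_{n-m}$ — produces \eqref{b3} and finishes the proof.
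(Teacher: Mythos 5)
Your proposal is correct and follows essentially the same argument as the paper: the base case via counting loops at the root, and for $n\ge2$ the decomposition at the last child $m+1$ of the root into $\tau'$ and $\tau''$, with the substitution $w\mapsto w+y+(m-1)z$ for the embedded exploration of $\tau''$ and the factor $1/(1-w-(n-1)x)$ from the root's closing loops and forward arcs. The extra care you take in justifying the unique factorization into stages and the classification of the added arcs only makes explicit what the paper leaves implicit.
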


Specializing to the parameters in \eqref{a9}, we have thus the following 
corollary.
Here
\begin{align}
  \label{G^}
\GX_n(w,x,z):=G_n(w,x,z,z)
\quad\text{and}\quad
\GY_n(w,x,z):=G_n(w,z,x,z)
\end{align}
are generating
functions for $(L,F,B+C)$ and $(L,B,F+C)$, respectively.
\begin{proposition}
  Let $\GX_n(w,x,z)$ and $\GY_n(w,x,z)$ be defined by the recursions
  \begin{align}\label{b4}
    \GX_1(w,x,z)&:=\GY_1(w,x,z):=\frac1{1-w}
  \end{align}
and, for $n\ge2$,
  \begin{align}\label{b5}
\GX_n(w,x,z)&:=
\frac{1}{1-w-(n-1)x}\sum_{m=1}^{n-1}\GX_m(w,x,z)\GX_{n-m}(w+mz,x,z),
\\\label{b6}
\GY_n(w,x,z)&:=
\frac{1}{1-w-(n-1)z}\sum_{m=1}^{n-1}\GY_m(w,x,z)\GY_{n-m}(w+x+(m-1)z,x,z).
  \end{align}
Then \eqref{a6}, or equivalently \eqref{a4}, holds if and only if
\begin{align}\label{b7}
  \GX_n(w,x,z)=\GY_n(w,x,z),
\qquad n\ge1.
\end{align}
\end{proposition}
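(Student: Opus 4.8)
The plan is to establish the statement in two stages: first I would verify that the recursively defined functions in \eqref{b4}--\eqref{b6} coincide with the specializations $\GX_n(w,x,z)=G_n(w,x,z,z)$ and $\GY_n(w,x,z)=G_n(w,z,x,z)$ introduced in \eqref{G^}, and then assemble the biconditional from the chain of equivalences already set up in \refS{Srec}. The reformulation itself carries little new content, since the substance was extracted when \eqref{a4} was reduced to \eqref{a7} and then to \eqref{a9}; the role of this proof is mainly to check that the two specializations of the single master recursion \eqref{b3} are exactly the pair of decoupled recursions \eqref{b5}--\eqref{b6}.

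For the first stage I would simply substitute. Setting $y=z$ in \eqref{b3} and using $w+y+(m-1)z=w+mz$ turns the shift argument into $w+mz$ and leaves the prefactor $1/(1-w-(n-1)x)$ unchanged, which is precisely \eqref{b5}. Evaluating \eqref{b3} instead at $G_n(w,z,x,z)$ — that is, sending the back-arc variable to $x$ and the cross-arc variable to $z$ — replaces the shift $w+y+(m-1)z$ by $w+x+(m-1)z$ and the prefactor denominator by $1-w-(n-1)z$, giving \eqref{b6}. The base case \eqref{b4} is immediate from \eqref{a10}, since $G_1(w,x,y,z)=1/(1-w)$ does not depend on $x,y,z$. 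As each recursion determines its sequence uniquely from the $n=1$ value, the functions produced by \eqref{b4}--\eqref{b6} are indeed the specializations in \eqref{G^}.

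For the second stage I would chain the equivalences. By the definitions \eqref{G^}, the condition \eqref{b7} that $\GX_n=\GY_n$ for all $n\ge1$ is literally \eqref{a9}, namely $G_m(w,x,z,z)=G_m(w,z,x,z)$ for all $m\ge1$; and \eqref{a9} is equivalent to \eqref{a7} by the tree-edge normalization \eqref{a8}. It remains to match \eqref{a7} (for all $m$) with \eqref{a4}, which in turn is equivalent to the extended conjecture \eqref{a6} via \eqref{a5}. The implication \eqref{a7} $\Rightarrow$ \eqref{a4} is immediate from the forest decomposition $F_n=\sum\prod_{i=1}^kG_{m_i}(w+M_{i-1}z,x,y,z,t)$: under either specialization the cross-arc variable equals $z$, so the shift $w\mapsto w+M_{i-1}z$ is the same on both sides, and equal factors multiply to equal products.

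The one step requiring genuine (if light) care — and where I expect to spend the most effort — is the converse \eqref{a4} $\Rightarrow$ \eqref{a7}, i.e.\ \emph{necessity}, which I would prove by induction on $n$. In the forest sum, the unique $k=1$ term is the spanning tree and contributes exactly $G_n$ at the relevant specialization, that is $\GX_n$ on the left and $\GY_n$ on the right; every $k\ge2$ term is a product of factors $G_{m_i}(w+M_{i-1}z,\dots)$ with all $m_i<n$, which agree on the two sides by the induction hypothesis, again because the shift $w+M_{i-1}z$ and the remaining arguments are common to both specializations. Hence equality of the full generating functions in \eqref{a4} forces equality of the leftover $k=1$ terms, yielding \eqref{a7} for the index $n$ and closing the induction. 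Combining the two stages gives the chain \eqref{b7} $\iff$ \eqref{a9} $\iff$ \eqref{a7} $\iff$ \eqref{a4} $\iff$ \eqref{a6}, which is the assertion.
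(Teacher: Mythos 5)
Your proof is correct and follows essentially the same route as the paper: verify that \eqref{b4}--\eqref{b6} are the specializations of the master recursion \eqref{b3} under \eqref{G^}, then chain the equivalences $\eqref{a6}\iff\eqref{a4}\iff\eqref{a7}\iff\eqref{a9}\iff\eqref{b7}$ already established in \refS{Srec}. The only difference is that you spell out the induction for the necessity direction \eqref{a4}$\Rightarrow$\eqref{a7}, which the paper asserts in one line; your argument is exactly the intended one (modulo the harmless point that the $k=1$ terms carry the same combinatorial multiplicity on both sides rather than contributing $G_n$ exactly once).
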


\begin{proof}
First, $\GX_n$ and $\GY_n$ satisfy \eqref{b4}--\eqref{b6} by \eqref{G^} and
\eqref{b3},
so they can be defined by these recursions.

Furthermore, we have shown that
$\eqref{a6}\iff\eqref{a4}\iff\eqref{a7}\iff\eqref{a9}$,
and $\eqref{a9}\iff\eqref{b6}$ by \eqref{G^}.
\end{proof}

In order to show \eqref{a3}, \ie{} Knuth's conjecture, it suffices to 
verify \eqref{b7} for $z=w$. In other words:

\begin{corollary}\label{CKnuth}
  Knuth's conjecture $F\eqd B$, or equivalently \eqref{a3},
is equivalent to %\eqref{b7} for $z=w$.
\begin{align}\label{b7=}
  \GX_n(w,x,w)=\GY_n(w,x,w),
\qquad n\ge1,
\end{align}
where $\GX_n$ and $\GY_n$ are given by the recursions \eqref{b4}--\eqref{b6}.
\end{corollary}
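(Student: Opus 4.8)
The statement is the restriction to $z=w$ of the preceding Proposition: Knuth's \eqref{a3} is the case $z=w$, $t=w$ of \eqref{a4}, and \eqref{b7=} is the case $z=w$ of \eqref{b7}. Since a restriction of an equivalence need not itself be an equivalence, the plan is to rerun the forest argument of \refS{Srec} with $z=w$ and $t=w$ fixed from the outset. Writing $A_n:=F_n\bigpar{w,x,w,w,w}$ and $B_n:=F_n\bigpar{w,w,x,w,w}$, so that Knuth's conjecture reads $A_n=B_n$ for all $n$, the forest formula together with \eqref{a8} and \eqref{G^} gives
\[
A_n=\sum_{\phi}w^{n-k}\prod_{i=1}^{k}\GX_{m_i}\bigpar{(1+M_{i-1})w,x,w},
\qquad
B_n=\sum_{\phi}w^{n-k}\prod_{i=1}^{k}\GY_{m_i}\bigpar{(1+M_{i-1})w,x,w},
\]
the sums being over depth-first forests $\phi$ on $[n]$ with $k$ trees of sizes $m_1,\dots,m_k$ and $M_j=\sum_{i\le j}m_i$, the factor $w^{n-k}$ collecting the tree arcs.

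Next I would split off the last tree started by the search: it has some size $m$, begins after $n-m$ already-visited vertices to which it may send cross arcs, and leaves an independent sub-forest behind. This yields the closed recursion
\[
A_n=\sum_{m=1}^{n}w^{m-1}\GX_m\bigpar{(1+n-m)w,x,w}\,A_{n-m},\qquad A_0=1,
\]
and the same with $\GY$ and $B$. The decisive feature is that the term $m=n$ is $w^{n-1}\GX_n(w,x,w)$ (resp.\ $w^{n-1}\GY_n(w,x,w)$): its arguments are \emph{diagonal}, and this term is governed exactly by \eqref{b7=}. Arguing by induction on $n$ and substituting $A_j=B_j$ for $j<n$, the difference $A_n-B_n$ collapses to a $w$-weighted combination of the quantities $\gD_m:=\GX_m\bigpar{(1+n-m)w,x,w}-\GY_m\bigpar{(1+n-m)w,x,w}$ for $1\le m\le n$, in which every term has first-argument coefficient $c=1+n-m$ and hence lies on the single antidiagonal $m+c=n+1$; the end term $m=n$ is the diagonal discrepancy controlled by \eqref{b7=}.

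The hard part is the remaining, genuinely \emph{off-diagonal}, differences $\gD_m$ with $m<n$, i.e.\ $\GX_m(cw,x,w)-\GY_m(cw,x,w)$ with $c\ge2$: the hypothesis \eqref{b7=} only asserts the vanishing of the diagonal case $c=1$, and a rational function is not pinned down on the locus (first argument)$=$(third argument) by its values there. I would try to close the induction not by forcing each $\gD_m$ to vanish, but by showing the whole antidiagonal combination is determined by the diagonal data. The natural tool is the virtual-root identity obtained by adjoining a common parent of all forest roots, namely
\[
\GX_{n+1}(w,x,z)=\frac{1}{1-w-nx}\sum_{\phi}\prod_{i}\GX_{m_i}\bigpar{w+(1+M_{i-1})z,x,z},
\]
together with its $\GY$-counterpart, which re-express the weighted forest sums through single generating functions. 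Here the essential difficulty surfaces as an asymmetry: for $\GX$, where back and cross arcs share the weight $z$, the back arc to the virtual root exactly reproduces the shift seen in $A_n$, whereas for $\GY$, where back arcs carry the weight $x$, the virtual root contributes a shift $x+M_{i-1}w$ rather than $(1+M_{i-1})w$, so that the $\GX$-recursion stays within the pure multiples $cw$ of $w$ while the $\GY$-recursion escapes into arguments $cw+x$. Reconciling the two sides along each antidiagonal, so that $A_n=B_n$ for all $n$ becomes equivalent to \eqref{b7=} for all $n$ \emph{without} establishing the stronger (and still open) full identity \eqref{b7}, is where I expect the real work to lie.
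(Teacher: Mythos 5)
You are right that the corollary is not a formal restriction of the preceding proposition, and your reduction of $A_n-B_n$ to an antidiagonal combination of the differences $\gD_m=\GX_m(cw,x,w)-\GY_m(cw,x,w)$, $c=1+n-m$, is a correct start. But the argument then stops: you explicitly leave the terms with $c\ge2$ as ``where the real work lies'', so as it stands this is not a proof of the equivalence in either direction (even your converse induction needs the multi-tree terms to cancel, which already requires the $c\ge2$ cases).

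The missing idea is that these evaluations are not genuinely off-diagonal. Every factor produced by the recursions \eqref{b4}--\eqref{b6} has the form $(1-w-\ell)\qw$ with $\ell$ a linear form in the remaining variables, and an easy induction on $n$ gives the homogeneity property
\begin{align*}
\GX_n(w+a,x,z)=(1-a)^{-(2n-1)}\,\GX_n\Bigpar{\tfrac{w}{1-a},\tfrac{x}{1-a},\tfrac{z}{1-a}},
\end{align*}
and identically for $\GY_n$. (Probabilistically this is the thinning property of the geometric distribution: the vertices explored after the first tree perform a DFS of a smaller digraph with a new parameter $p'=p(n-m)/(n-pm)$; this is exactly why it matters that the conjecture is quantified over all $p$.) Taking $a=(c-1)w$ turns $\GX_m(cw,x,w)$ into $(1-(c-1)w)^{-(2m-1)}\GX_m(w',x',w')$ with $w'=w/(1-(c-1)w)$, $x'=x/(1-(c-1)w)$ --- a \emph{diagonal} evaluation at a rescaled point, with the same prefactor for $\GX$ and $\GY$. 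Since \eqref{b7=} is an identity in the two free variables $w,x$, it therefore annihilates every $\gD_m$, not only the one with $c=1$; conversely, the same reduction lets your induction cancel all multi-tree terms and isolate $w^{n-1}\bigpar{\GX_n(w,x,w)-\GY_n(w,x,w)}$. Your remark that a rational function is not pinned down on the locus (first argument)$\,=\,$(third argument) by its values there is true in general but beside the point here because of this extra homogeneity; the virtual-root identity and the $\GX$/$\GY$ asymmetry you discuss are a detour that is not needed.
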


\begin{problem}
  Prove \eqref{b7}, or at least \eqref{b7=}!
\end{problem}

\begin{remark}
  Knuth verified his conjecture by exact calculations for $n\le 9$.
We have extended this by verifying \eqref{b7} (and thus \eqref{a4})
for $n\le 18$.
\end{remark}

\begin{remark}
  Another recursion for $\GY(w,x,z)$ is 
  \begin{align}\label{mag}
\GY_n(w,x,z)&:=
\frac{1}{1-w}\sum_{m=1}^{n-1}\GY_m(w+x,x,z)\GY_{n-m}(w+mz,x,z).
  \end{align}
This is obtained similarly as the recursion above, but now defining 
$\tau'$ to be the subtree of $\tau$ consisting of the first child of the
root (\ie{} 2) and all its descendants, and $\tau''$ to be the subtree
consisting of all other vertices (including the root 1).
An exploration of $\tau$ consists of 
\begin{enumerate}
\item\label{tauga} a number $\ge0$ of loops at the root 1,
\item a tree arc from 1 to 2,
\item\label{taugc} an exploration of $\tau'$,
\item\label{taugd} return to the root and an exploration of $\tau''$.
\end{enumerate}
Step \eqref{tauga} yields a factor $(1-w)\qw$ in the generating function.
Step \eqref{taugc} is as an exploration of the tree $\tau'$, but
each time we may have a loop, we may also have a back edge to 1;
this corresponds to replacing $w$ by $w+y$ in the generating function.
Similarly, in Step \eqref{taugd}, each time we may have a loop, we may also
have an edge to one of the vertices in $\tau'$; this is either a forward
edge (if we are at the root) or a cross edge; this corresponds to replacing
$w$ by $w+mz$ in the generating function, with $m:=|\tau'|$.
Together, this yields \eqref{mag}.

Note that although it seems possible to derive also a recursion for 
$\GX_n(w,x,z)$, or more generally for
$G_n(w,x,y,z)$, by the decomposition used here, it will be more complicated
because we then would have to keep control over whether edges from $\tau''$
to $\tau'$ are forward or cross edges. We leave this to the reader.
\end{remark}

\newcommand\AAP{\emph{Adv. Appl. Probab.} }
\newcommand\JAP{\emph{J. Appl. Probab.} }
\newcommand\JAMS{\emph{J. \AMS} }
\newcommand\MAMS{\emph{Memoirs \AMS} }
\newcommand\PAMS{\emph{Proc. \AMS} }
\newcommand\TAMS{\emph{Trans. \AMS} }
\newcommand\AnnMS{\emph{Ann. Math. Statist.} }
\newcommand\AnnPr{\emph{Ann. Probab.} }
\newcommand\CPC{\emph{Combin. Probab. Comput.} }
\newcommand\JMAA{\emph{J. Math. Anal. Appl.} }
\newcommand\RSA{\emph{Random Structures Algorithms} }
\newcommand\DMTCS{\jour{Discr. Math. Theor. Comput. Sci.} }

\newcommand\AMS{Amer. Math. Soc.}
\newcommand\Springer{Springer-Verlag}
\newcommand\Wiley{Wiley}

\newcommand\vol{\textbf}
\newcommand\jour{\emph}
\newcommand\book{\emph}
\newcommand\inbook{\emph}
\def\no#1#2,{\unskip#2, no. #1,} %(typeset after year) 
\newcommand\toappear{\unskip, to appear}

\newcommand\arxiv[1]{\texttt{arXiv}:#1}
\newcommand\arXiv{\arxiv}

\newcommand\xand{and }
\renewcommand\xand{\& }

\def\nobibitem#1\par{}


\begin{thebibliography}{99}

\bibitem{SJ364}
Philippe Jacquet \xand Svante Janson.
Depth-First Search performance in a random digraph with geometric 
outdegree distribution.
Preprint, 2022.
\arxiv{2212.14865}


\bibitem[Knuth(2022+)]{Knuth12A}
Donald E. Knuth.
\emph{The Art of Computer Programming}, Section 7.4.1.2
%(Preliminary draft, 13 December 2021). %revision -90
%(Preliminary draft, 15 January 2022). %revision -89
(Preliminary draft, 13 February 2022). %revision -87
\\ {\tt http://cs.stanford.edu/\char`\~knuth/fasc12a.ps.gz}


%\bibitem[??]{??} 

\end{thebibliography}
\end{document}